\documentclass[12pt]{article}
\usepackage[latin1]{inputenc}
\usepackage{color}
\usepackage{bbm}
\usepackage{amsmath,amssymb}
\usepackage{amsthm}
\usepackage{enumerate}
\usepackage{mathrsfs}
\usepackage{verbatim}
\usepackage{graphicx}
\usepackage{titling}
\predate{}
\postdate{}

\newtheorem{thm}{Theorem}

\newtheorem{lem}[thm]{Lemma}
\newtheorem{prop}[thm]{Proposition}
\newtheorem{defin}[thm]{Definition}

\newtheorem{rem}[thm]{Remark}


\frenchspacing
\textwidth=16cm
\textheight=23cm
\parindent=16pt
\makeatletter
\oddsidemargin=0cm\evensidemargin=0cm
\topmargin=-0.5cm

\newcommand{\rd}{\,\mathrm{d}}

\newcommand{\bsx}{\boldsymbol{x}}
\newcommand{\bsy}{\boldsymbol{y}}
\newcommand{\bsz}{\boldsymbol{z}}

\newcommand{\bst}{\boldsymbol{t}}

\newcommand{\bszero}{\boldsymbol{0}}

\newcommand{\NN}{\mathbb{N}}

\newcommand{\ZZ}{\mathbb{Z}}

\newcommand{\disp}{\mathrm{disp}}

\newcommand{\cP}{\mathcal{P}}

\allowdisplaybreaks

\begin{document}
\author{Ralph Kritzinger and Jaspar Wiart\thanks{The authors are supported by the Austrian Science Fund (FWF),
Project F5509-N26, which is part of the Special Research Program ``Quasi-Monte Carlo Methods: Theory and Applications''.}}

\title{Improved dispersion bounds for \\ modified Fibonacci lattices}
\date{}
\maketitle 

\begin{abstract}
     We study the dispersion of point sets in the unit square; i.e. the size of the largest axes-parallel box amidst such point sets. It is known that  $
    \liminf_{N\to\infty} N\disp(N,2)\in \left[\frac54,2\right],
$
where $\disp(N,2)$ is the minimal possible dispersion for an $N$-element point set in the unit square. The upper bound 2 is obtained by an explicit point construction - the well-known Fibonacci lattice. In this paper we find a modification of this point set such that its dispersion is significantly lower than the dispersion of the Fibonacci lattice. Our main result will imply that $\liminf_{N\to\infty} N\disp(N,2)\leq \varphi^3/\sqrt{5}=1.894427...$
\end{abstract}

\section{Introduction and main results}
We consider point sets $\cP$ in the unit square $[0,1]^2$ consisting of $N\geq 1$ (not necessarily distinct) elements. We are interested in the size of the largest box amidst such point sets which does not contain any points of $\cP$. We speak of this size as the \emph{(standard) dispersion} of the point set $\cP$. More formally, we introduce the set $\mathcal{B}$ of all axes-parallel boxes in the unit square; i.e.  $\mathcal{B}=\{[x_1,y_1)\times [x_2,y_2)\mid 0\leq x_1\leq y_1\leq 1, 0\leq x_2\leq y_2\leq 1\}. $ The dispersion of $\cP$ can then be defined as $\disp(\cP):=\sup_{B\in\mathcal{B}, B\cap\cP=\emptyset}\lambda(B)$, where $\lambda(B)$ denotes the area of the box $B$.\\
In several recent papers the dispersion of point sets has also been considered with respect to periodic boxes on the torus (e.g. \cite{Bren, Ullr}). We identify the two-dimensional torus $\mathbb{T}([0,1]^2)$ with $[0,1]^2$ and introduce the set $\mathcal{B}_{\mathbb{T}}$ of periodic intervals on the torus $\mathbb{T}([0,1]^2)$ as follows. For $x,y\in [0,1]$ set
$$ I(x,y)=\begin{cases}
           [x,y) & \text{if $x\leq y$}, \\
           [0,y)\cup [x,1)& \text{if $x>y$,}
          \end{cases}$$
and for $\bsx=(x_1,x_2),\bsy=(y_1,y_2) \in [0,1]^2$ we set $B(\bsx,\bsy)=I(x_1,y_1)\times I(x_2,y_2)$.
We define $\mathcal{B}_{\mathbb{T}}:=\{B(\bsx,\bsy)\mid \bsx,\bsy \in [0,1]^2\}$ and the \emph{torus dispersion} as $\mathrm{disp}(\cP)_{\mathbb{T}}:=\sup_{B'\in\mathcal{B}_{\mathbb{T}}, B'\cap\cP=\emptyset}\lambda(B')$. \\
In the following we summarize relevant facts on the standard and periodic dispersion of point sets in the unit square. By~\cite[Theorem 1]{Dumi} we have \begin{equation} \label{lower54}
    \mathrm{disp}(\cP)\geq \max\left\{\frac{1}{N+1},\frac{5}{4(N+5)}\right\}
\end{equation}   for the standard dispersion of every $N$-element point set $\cP$ in the unit square.
On the other hand we know a construction of points with notably small dispersion - the well-known \emph{Fibonacci lattice}. In order to introduce this set, recall the definition of Fibonacci numbers: We set $F_1=F_2=1$ and $F_k=F_{k-1}+F_{k-2}$ for $k\geq 3$.
Let $m\geq 3$. The Fibonacci lattice $\mathcal{F}_m$ with $F_m$ elements is given by
$$ \mathcal{F}_m=\left\{\left(\frac{k}{F_m},\left\{\frac{kF_{m-2}}{F_m}\right\}\right): k=0,1,\dots,F_m-1\right\}, $$
where $\{x\}$ denotes the fractional part of a real number $x$.
By~\cite[Theorem 6.10]{Bren} we know that the dispersion of $\mathcal{F}_m$ for $m\geq 8$ is given by\footnote{The authors of~\cite{Bren} state this result for all $m\geq 6$, but it only holds for $m\geq 8$, as the inequality $F_5F_{m-2}\leq 2(F_m-1)$ which they use in their proof of Theorem 6.10 is only satisfied for $m\geq 8$ (with equality only for $m=8$.)} 
\begin{equation*} \label{fibodisp}
    \mathrm{disp}(\mathcal{F}_m)=\frac{2(F_m-1)}{F_m^2};
\end{equation*} 
i.e. $\lim_{m\to\infty}   |\mathcal{F}_m| \mathrm{disp}(\mathcal{F}_m)=2.$
Therefore we have 
\begin{equation} \label{constants}
    \liminf_{N\to\infty} N\disp(N,2)\in \left[\frac54,2\right],
\end{equation}
where $\disp(N,2):=\inf_{\cP\subset [0,1]^2:\, |\cP|=N}\disp(\cP)$. The aim of this paper is to narrow down this interval from above by finding a construction of points with smaller asymptotic dispersion than the Fibonacci lattice. \\
The outstanding role of the Fibonacci lattice in the theory of dispersion becomes clear when we consider the torus dispersion. By~\cite{Ullr} it is known that for every $N$-element point set $\cP$ in the unit square we have $\mathrm{disp}(\cP)_{\mathbb{T}}\geq \frac{2}{N}$ (see also~\cite[Theorem 6.1]{Bren}). Breneis and Hinrichs~\cite[Theorem 6.2]{Bren} could show that the torus dispersion of the Fibonacci lattice satisfies
  $$\mathrm{disp}(\mathcal{F}_m)_{\mathbb{T}}=\frac{2}{F_m},$$
which is best possible. In particular, these results show that $$\liminf_{N\to\infty} N\disp(N,2)_{\mathbb{T}}=2,$$ where $\disp(N,2)_{\mathbb{T}}:=\inf_{\cP\subset [0,1]^2:\, |\cP|=N}\disp(\cP)_{\mathbb{T}}$.

As already announced above, our goal is to narrow down the interval in~\eqref{fibodisp} by replacing the right-hand limit 2 by a smaller constant. To this end, we modify $\mathcal{F}_m$ in the following way:
\begin{defin}\label{def1}\rm
For a fixed Fibonacci number $F_m$ with $m\geq 3$ (since we always consider an arbitrary but fixed parameter $m$, we suppress the dependence of all these definitions on $m$), define the $F_m$-periodic function $\pi: \NN_0\to\{0,1,\dots,F_m-1\}$ by
\[ 
 k \mapsto F_m\left\{\frac{kF_{m-2}}{F_m}\right\}=
kF_{m-2} \pmod{F_m}.
\]
On the set $\{0,1,\dots,F_m-1\}$ the function $\pi$ operates as a permutation. We introduce the $F_m$-periodic map $s: \NN_0\to\{1,\varphi\}$ to be
\[
 i \mapsto 
\begin{cases}
    \varphi  & \text{if $\pi(i)<\pi(i+1)$}, \\
    1        & \text{otherwise,}
\end{cases} 
\]
where $\varphi=\frac{\sqrt{5}+1}{2}$ is the golden ratio. Let $L:=\sum_{i=0}^{F_m-1}s(i)$ (we will see in Lemma ~\ref{sumofgaps} that $L=\varphi^{m-1}$) and $x_k=\sum_{i=0}^{k-1}s(i)$ for $k=0,\dots,F_m-1$.  We define the \emph{modified Fibonacci lattice} $\widetilde{\mathcal{F}}_m$ to be the set of points
   \[
   \widetilde{\mathcal{F}}_m:=\left\{\left(\frac{x_k}{L},\frac{x_{\pi(k)}}{L}\right),k=0,1,\dots,F_m-1\right\}.
   \]
\end{defin}

\begin{figure}[h]  
     \centering
     {\includegraphics[width=70mm]{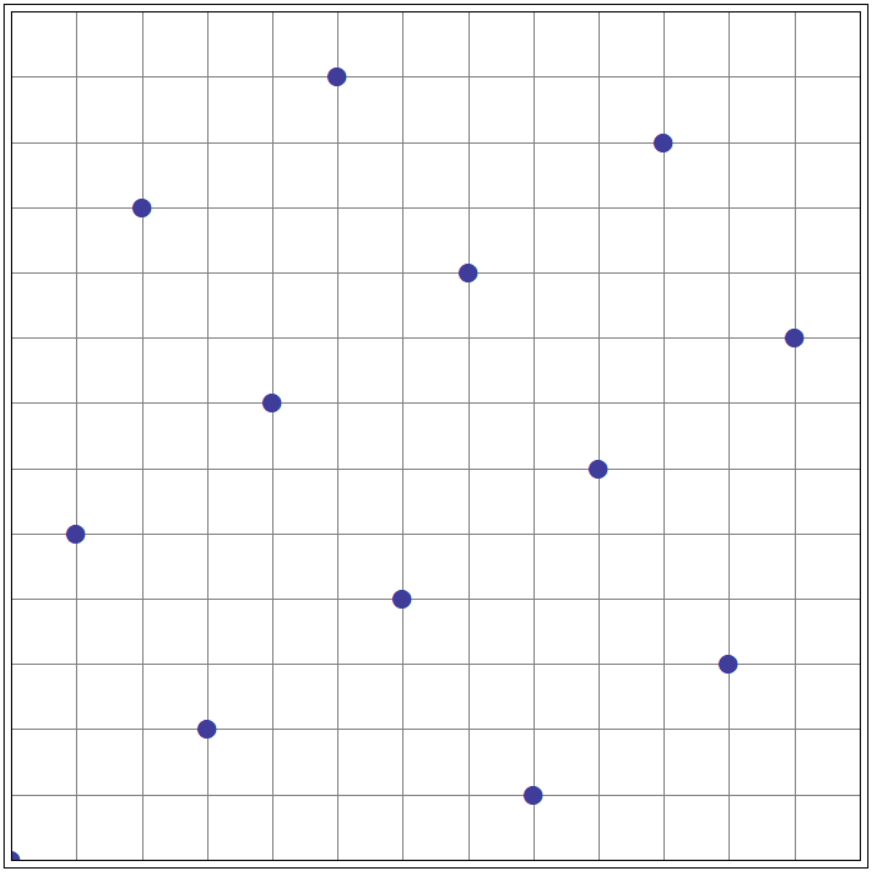}}
     \hspace{.1in}
     {\includegraphics[width=70mm]{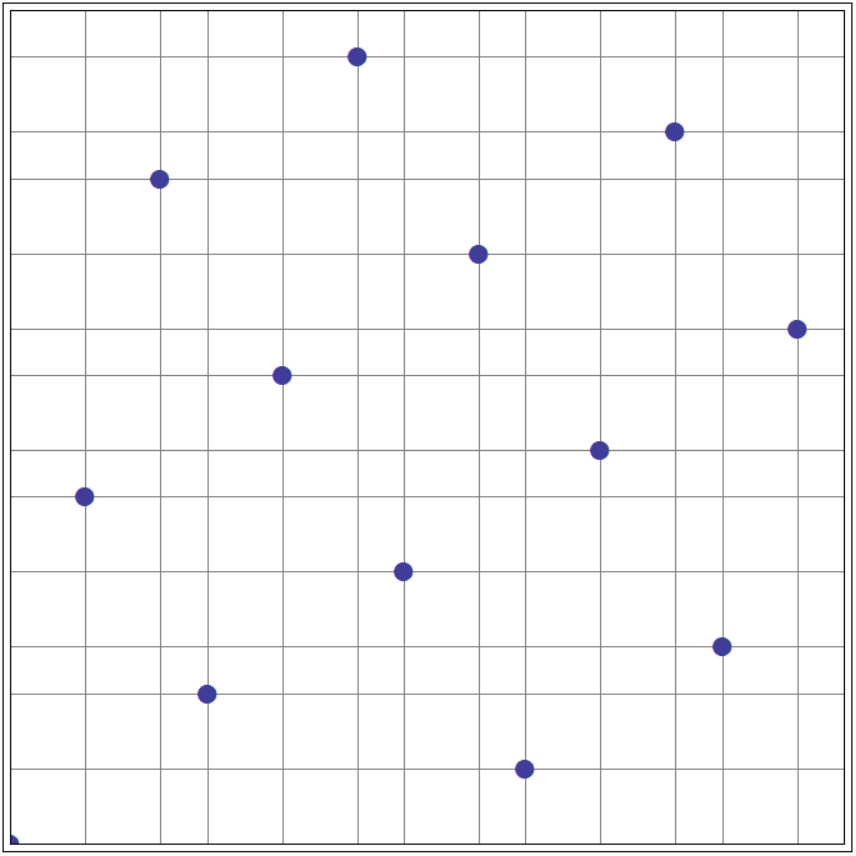}}
     \caption{Left: the points of $\mathcal{F}_7$. Right: The points of $\widetilde{\mathcal{F}}_7$. Notice how the gaps between the $x$-coordinates of the points vary in length and that the $y$-coordinates are a permutation of the $x$-coordinates.}
     \label{nolabel}
\end{figure}

\begin{rem} \rm
The point $(0,0)$ can be removed without changing the dispersion of $\widetilde{\mathcal F}_m$. However its inclusion simplifies those proofs which rely on the torus dispersion results of Fibonacci lattices in \cite{Bren}.
\end{rem}

A maximal empty box amidst a point set is one that is bounded on each side by either the edge of the unit square or a point from the point set. We define two different kinds of maximal empty boxes. 
  
\begin{defin} \rm
Given some set of points, a \emph{maximal interior box (amidst those points)} is an empty box that is bounded on each side by a point. A \emph{maximal exterior box (amidst those points)} is an empty box where at least one side is bounded by an edge of the unit square and each of the other three sides is bounded by either an edge of the unit square or a point. 
\end{defin}

The motivation behind $\tilde{\mathcal F}_m$ was to find a way to modify the Fibonacci lattice in such a way as to improve the constant of $2$ in the limit.  The worst maximal boxes amidst the points of the Fibonacci lattice are the tall and narrow boxes (i.e.\ boxes of width $2/F_m$) and the wide and short boxes (i.e.\ boxes of height $2/F_m$). The idea is to simply make the tall and narrow boxes a little bit more narrow by adjusting the gaps between the $x$-coordinates of the points and then adjusting the $y$-coordinates by the same rule. A simple way to do this is to make the $x$-axis gap between the points $(k/F_m,\{kF_{m-2}/F_m\})$ and $((k+1)/F_m,\{(k+1)F_{m-2}/F_m\})$ smaller if $(k/F_m,\{kF_{m-2}/F_m\})$ is the top of tall and narrow box. This gives rise to Definition \ref{def1} with the values of $s$ determined by solving for the values that makes the area of all interior boxes equal.


\begin{figure}[h] 
     \centering
     {\includegraphics[width=70mm]{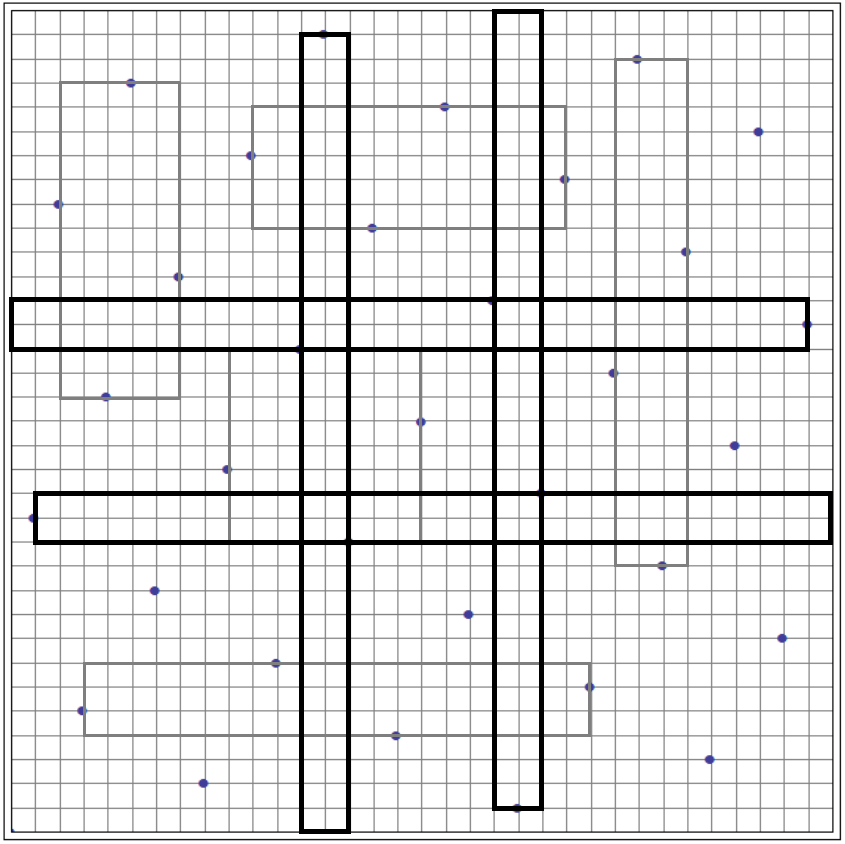}}
     \hspace{.1in}
     {\includegraphics[width=70mm]{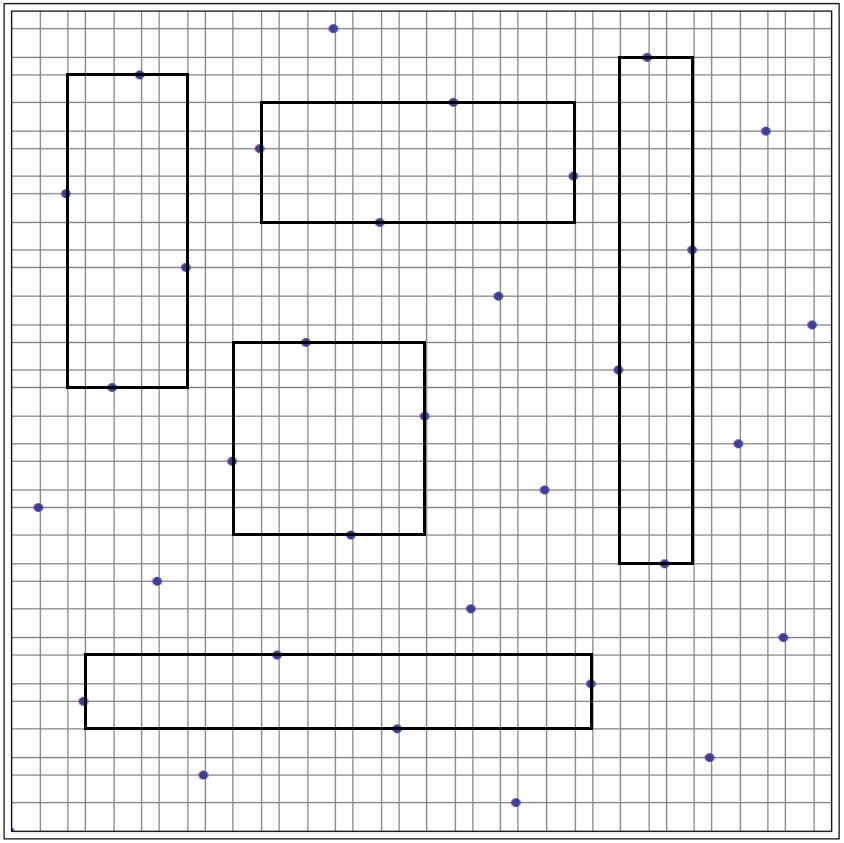}}
     \caption{Left: The maximal interior boxes (grey) of the Fibonacci lattice have dimensions $(F_{k}/F_m)\times (F_{m-k+3}/F_m)$ for $k=4,\dots, m-1$ (here $m=9$). The area of every one of these boxes is strictly less than $\mathrm{disp}({\mathcal{F}}_m)=2(F_m-1)/F_m^2$; i.e. the area of the black boxes, which are the only four boxes in $\mathcal{F}_m$ with maximal size.  Right: The maximal interior boxes of the modified Fibonacci lattice have dimensions $(\varphi^{k-1}/\varphi^{m-1})\times(\varphi^{m-k+2}/\varphi^{m-1})$ for $k=4,\dots,m-1$, each of which has area equal to $\mathrm{disp}(\widetilde{\mathcal{F}}_m)=\varphi^{3-m}$. Notice that there are far more than $4$ maximal empty boxes with area equal to $\varphi^{3-m}$ amidst the points of $\widetilde{\mathcal{F}}_m$.}
     \label{boxes}
\end{figure}

The central result of this paper is the following theorem on the dispersion of $\widetilde{\mathcal F}_m$.

\begin{thm} \label{precise}
Let $m\geq 5$. Every maximal interior box amidst $\widetilde{\mathcal F}_m$ has area $\varphi^{3-m}$ while the maximal exterior boxes have area less than $\varphi^{3-m}$. In particular $\mathrm{disp}(\widetilde{\mathcal F}_m)=\varphi^{3-m}$ and $$\lim_{m\to\infty}|\widetilde{\mathcal{F}}_m|\mathrm{disp}(\widetilde{\mathcal{F}}_m)=\frac{\varphi^3}{\sqrt{5}}=1+\frac{2}{\sqrt{5}}=1.894427...  $$
\end{thm}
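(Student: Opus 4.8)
The plan is to analyze maximal empty boxes amidst $\widetilde{\mathcal{F}}_m$ by transporting the combinatorial structure of the Fibonacci lattice $\mathcal{F}_m$ to the modified set via the bijection $k \mapsto k$ on indices. The key observation is that $\widetilde{\mathcal{F}}_m$ has exactly the same \emph{order structure} as $\mathcal{F}_m$ in both coordinates: the $x$-coordinates $x_k/L$ are increasing in $k$ (since each $s(i)>0$), and the $y$-coordinates are $x_{\pi(k)}/L$, so the relative order of points in each coordinate is identical to that of $\mathcal{F}_m$. Consequently, a subset of indices $\{a,b,c,d\}$ bounds a maximal interior box amidst $\widetilde{\mathcal{F}}_m$ if and only if the corresponding four points bound a maximal interior box amidst $\mathcal{F}_m$. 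So the \emph{list} of maximal interior boxes (indexed by their bounding points) is the same for both sets; only their dimensions change. From the excerpt (Figure~\ref{boxes}), the maximal interior boxes of $\mathcal{F}_m$ have index-widths $F_k$ and index-heights $F_{m-k+3}$ for $k=4,\dots,m-1$. The first step is therefore to establish this order-isomorphism carefully and to identify, for each such box, which consecutive runs of $s$-values its width and height correspond to.

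The second and crucial step is the arithmetic identity that makes all interior boxes have equal area. I would prove, using Lemma~\ref{sumofgaps} (which gives $L=\varphi^{m-1}$) together with a companion lemma computing partial sums $\sum s(i)$ over the relevant index-intervals, that the box which spans $F_k$ consecutive $x$-gaps has width $\varphi^{k-1}/\varphi^{m-1}$ and the box spanning $F_{m-k+3}$ consecutive $y$-gaps has height $\varphi^{m-k+2}/\varphi^{m-1}$. The product is then $\varphi^{(k-1)+(m-k+2)-2(m-1)} = \varphi^{3-m}$, independent of $k$. The main obstacle is precisely this: showing that a run of $F_j$ consecutive values of $s$ (at the positions dictated by the Fibonacci box structure) sums to $\varphi^{j-1}$. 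This requires understanding the distribution of the pattern of $s$-values, which is governed by the three-distance structure of the sequence $kF_{m-2} \bmod F_m$; I expect one needs a recursive/self-similar description of the $s$-pattern (a substitution rule of the form $1\mapsto 1\varphi$, $\varphi\mapsto 1$ or similar, matching $\varphi\cdot\varphi^{j-1}=\varphi^{j-2}+\varphi^{j-1}$... i.e.\ $\varphi^{j}=\varphi^{j-1}+\varphi^{j-2}$), and then the partial-sum claim follows by induction on $m$ or on $j$.

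The third step is to bound the maximal exterior boxes. Here I would argue that an exterior box touching, say, the left edge $x=0$ is contained in (or directly comparable to) an interior box plus the strip to its left, but because $(0,0)\in\widetilde{\mathcal{F}}_m$ and the torus-dispersion results of~\cite{Bren} control the analogous periodic boxes, each exterior box is strictly smaller than $\varphi^{3-m}$. Concretely, I would case-split on which edge(s) the exterior box meets: a box bounded by one edge corresponds to a "half-open" version of an interior box and loses area compared to the full interior box; a box bounded by two opposite edges (full-width or full-height strips) is empty only if it is thinner than the minimal gap, which is $1/\varphi^{m-1} < \varphi^{3-m}$ for $m\ge 5$; a box in a corner is handled similarly. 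The inequality being strict (rather than $\le$) is what forces $m\ge 5$ and requires a short separate check.

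Finally, the asymptotic statement is immediate: $|\widetilde{\mathcal{F}}_m| = F_m$ and $\mathrm{disp}(\widetilde{\mathcal{F}}_m) = \varphi^{3-m}$, so
\[
|\widetilde{\mathcal{F}}_m|\,\mathrm{disp}(\widetilde{\mathcal{F}}_m) = F_m\,\varphi^{3-m} = \varphi^3 \cdot \frac{F_m}{\varphi^m} \longrightarrow \varphi^3 \cdot \frac{1}{\sqrt5},
\]
using Binet's formula $F_m = (\varphi^m - (-\varphi)^{-m})/\sqrt5 \sim \varphi^m/\sqrt5$. The identity $\varphi^3/\sqrt5 = 1 + 2/\sqrt5$ follows from $\varphi^3 = 2\varphi+1 = \sqrt5 + 2$. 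I expect the bulk of the work, and essentially all the difficulty, to lie in the partial-sum lemma for runs of $s$-values in Step two; Steps one, three, and four are bookkeeping by comparison.
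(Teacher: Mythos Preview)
Your plan has the right high-level shape but contains a genuine gap in Step two that would block the argument. The claim that a run of $F_j$ consecutive values of $s$ sums to $\varphi^{j-1}$ is \emph{false} in general: as the paper's Lemma~\ref{translated} shows, $\sum_{i=r}^{r+F_k-1} s(i)$ equals $\varphi^{k-1}$ only for \emph{most} starting positions $r$, while for the exceptional $r$ (namely $\pi(r)<F_{m-k}$ when $k$ is odd, or $\pi(r)\ge F_m-F_{m-k}$ when $k$ is even) the sum is $\varphi^{k-1}\pm\varphi^{-1}$. Since maximal empty boxes of index-width $F_k$ occur at every shift $r$, your partial-sum lemma as stated cannot hold, and no Sturmian substitution argument will make it hold uniformly. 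The heart of the proof is precisely to control these exceptional cases, and your outline does not anticipate them.

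The paper resolves this by working on the torus rather than separating interior and exterior boxes a priori. Proposition~\ref{boxshape} classifies all maximal \emph{periodic} boxes as $(x_r,x_{r+F_k})\times(x_s,x_{s+F_{m-k+3}})$; the generic ones have area exactly $\varphi^{3-m}$. For the exceptional $r$ (or $s$), Lemma~\ref{pointslow} pins down $\pi(r+F_k)$ and shows that the vertical separation between the bounding points on the left and right exceeds $F_{m-k+3}$ steps, forcing the box to wrap around the square. Thus the exceptional partial sums occur \emph{only} for strictly periodic (hence exterior) boxes, and a direct estimate on the two non-periodic pieces shows each has area $<\varphi^{3-m}$. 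A symmetry of $\widetilde{\mathcal F}_m$ (reflection for even $m$, a quarter-rotation-plus-shift for odd $m$) reduces the $s$-exceptions to the $r$-exceptions. Your Step three, by contrast, tries to bound exterior boxes via ad hoc edge-touching casework and an appeal to torus-dispersion bounds; this is too vague and, more importantly, disconnected from the mechanism that actually produces the exterior boxes. Steps one and four of your outline are fine.
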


\begin{rem}
The paper ``How to find a battleship'' \cite{Fiat}  uses Fibonacci lattices to find ``battleships'' on the sea $[-F_m,F_m]\subseteq \ZZ^2$.  They found that Fibonacci lattices can find any battleships of size $\varphi^3/\sqrt{5}F_m$. However, their definition of battleship is the subset of $\ZZ^2$ that is contained in an axis-parallel box and the area of the battleship is the number of integer points that it contains. With this definition of battleship they can ignore the boxes that cause the dispersion of Fibonacci lattices to be $2(F_m-1)/F_m$.
\end{rem}

 Before we continue to prove this result in the following two sections, we would like to add several words on recent developments in the theory of dispersion of high-dimensional point sets. For point sets in $[0,1]^d$, where $d\geq 3$, we search for the size of the largest axes-parallel box $B\subseteq [0,1]^d$ which does not contain any point, which we call again the dispersion of the point set $\cP$ and we write $\disp(\cP)$. It is well-established that the optimal order of dispersion for an $N$-element point set in $[0,1]^d$ with respect to $N$ is $\mathcal{O}(N^{-1})$ like in the two-dimensional case. More precisely it is known that $\disp(\cP)\geq \frac{c_d}{N}$ for all $N$-element point sets in the $d$-dimensional unit cube $[0,1]^d$, where $c_d>0$ is independent of $N$ and tends to infinity at least logarithmically with $d$ (see~\cite[Theorem 1]{Aist}). Note that in the torus setting the constant in the corresponding lower bound grows linearly, see~\cite{Ullr}. On the other hand, we have the upper bound $\disp(\cP)\leq \frac{2^{7d}}{N}$, which is attained for certain $(t,m,d)$-nets (see~\cite[Section 4]{Aist}). A similar upper bound on multidimensional Hammersley point sets was earlier obtained in~\cite{Rote}, where the constant is super-exponential in the dimension $d$ though. The logarithmic growth of $c_d$ in the lower bound from~\cite{Aist} was shown to be best possible when we allow a worse dependence on $N$ (see~\cite{Lit,Sos,Ullr2} for existence results and~\cite{Lin,Ullr3} for deterministic approaches, where the former article uses a different language). 
 Other recent papers which study the dispersion of high-dimensional point sets, among others, are~\cite{Krieg, rudo}. \\
 In Section~\ref{aux} we collect several auxiliary results, which we use in Section~\ref{proof} to prove Theorem~\ref{precise}. In Section~\ref{improve} we discuss a slightly different modification of the Fibonacci lattice which has slightly lower dispersion than $\widetilde{\mathcal{F}}_m$ and perform numerical experiments on the $L_2$ discrepancy of these new point sets. In the final Section~\ref{conclusion} we summarize our main results, discuss unanswered questions and mention a result similar to Theorem~\ref{precise}.

\section{Auxiliary results} \label{aux}

We begin by recalling some identities involving the golden ratio. First is $\varphi^2=\varphi+1$. For $m\geq 2$ we have the beautiful relation
  \begin{equation} \label{superiden}
      \varphi^m=\varphi F_m+F_{m-1}
  \end{equation}
  between Fibonacci numbers and powers of the golden ratio, which can be easily shown by induction on $m$. We will use this relation several times throughout this paper. We always assume $m\geq 5$ in the following.

\begin{lem} \label{sumofgaps}
    For $L$ as introduced in Definition~\ref{def1} we have $
        L=\varphi^{m-1}.
    $
\end{lem}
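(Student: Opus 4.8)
The plan is to count how many of the $F_m$ values $s(0),\dots,s(F_m-1)$ equal $\varphi$ and how many equal $1$, since then $L = (\#\{i : s(i)=\varphi\})\cdot\varphi + (\#\{i: s(i)=1\})\cdot 1$. By definition $s(i)=\varphi$ exactly when $\pi(i)<\pi(i+1)$, i.e.\ when consecutive indices produce an ascent of the permutation $\pi$; here the indices are taken modulo $F_m$, so we are counting ``cyclic ascents'' of the sequence $\pi(0),\pi(1),\dots,\pi(F_m-1),\pi(0)$. The key arithmetic fact is that $\pi(i+1)-\pi(i) \equiv F_{m-2} \pmod{F_m}$, so the reduced value $\pi(i+1)-\pi(i)$ lies in $\{F_{m-2},\, F_{m-2}-F_m\}=\{F_{m-2},\,-F_{m-3}\}$. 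Hence $\pi(i)<\pi(i+1)$ precisely when the ``wrap-around'' does \emph{not} occur, i.e.\ when $\pi(i)+F_{m-2}<F_m$, equivalently $\pi(i)\in\{0,1,\dots,F_{m-1}-1\}$; and $\pi(i)>\pi(i+1)$ when $\pi(i)\in\{F_{m-1},\dots,F_m-1\}$.

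First I would make this precise: since $\pi$ restricted to $\{0,\dots,F_m-1\}$ is a permutation (as stated in Definition~\ref{def1}, using $\gcd(F_{m-2},F_m)=1$), the multiset $\{\pi(0),\dots,\pi(F_m-1)\}$ is exactly $\{0,1,\dots,F_m-1\}$. Therefore the number of $i$ with $\pi(i)<F_{m-1}$ is exactly $F_{m-1}$, and the number with $\pi(i)\ge F_{m-1}$ is $F_m-F_{m-1}=F_{m-2}$. Consequently $s(i)=\varphi$ for exactly $F_{m-1}$ values of $i$ and $s(i)=1$ for exactly $F_{m-2}$ values of $i$, giving
\begin{equation*}
  L=\sum_{i=0}^{F_m-1}s(i)=F_{m-1}\varphi+F_{m-2}.
\end{equation*}
Finally I would invoke the identity~\eqref{superiden}, namely $\varphi^{m-1}=\varphi F_{m-1}+F_{m-2}$, to conclude $L=\varphi^{m-1}$.

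I do not expect a serious obstacle here; the only point requiring a little care is the bookkeeping at the ``seam'' of the cyclic sequence — one must check that the definition $s(i)=\varphi \iff \pi(i)<\pi(i+1)$ with indices mod $F_m$ is correctly matched to the count ``$\pi(i)<F_{m-1}$'', and in particular that the case $\pi(i)=F_m-1$ (which forces a wrap, hence $s(i)=1$) and the case $\pi(i)=0$ are handled consistently. Since $\pi$ is $F_m$-periodic, the sum over a full period $i=0,\dots,F_m-1$ sees each residue class for $\pi(i)$ exactly once, so no index is double-counted or omitted; this is what makes the count clean. The identity~\eqref{superiden} is quoted in the excerpt as provable by an easy induction, so it may be used freely.
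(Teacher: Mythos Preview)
Your proof is correct and follows essentially the same route as the paper: both arguments count that exactly $F_{m-2}$ of the indices $i\in\{0,\dots,F_m-1\}$ satisfy $s(i)=1$ and then invoke~\eqref{superiden}. The only cosmetic difference is that the paper phrases the count as the number of times the sequence $(iF_{m-2})_{i=0}^{F_m}$ crosses a block boundary $A_\ell\to A_{\ell+1}$, whereas you characterise directly that $\pi(i)<\pi(i+1)\iff \pi(i)<F_{m-1}$ and use that $\pi$ is a bijection on $\{0,\dots,F_m-1\}$; these are the same observation.
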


\begin{proof}
   The essential observation for the proof of this lemma is the following:  For $\ell\in\{0,1,\dots,F_{m-2}\}$ define the disjoint sets $A_{\ell}=\{n+\ell F_m:n=0,1,\dots,F_m-1\}$. The inequality $\pi(i)<\pi(i+1)$ is equivalent to the existence of an integer $\ell\in\{0,1,\dots,F_{m-2}-1\}$ such that $iF_{m-2}$ and $ (i+1)F_{m-2}$ are both in $A_{\ell}$. On the other hand, $\pi(i)>\pi(i+1)$ is equivalent to the existence of an integer $\ell\in\{0,1,\dots,F_{m-2}-2\}$ such that $iF_{m-2} \in A_{\ell}$, whereas $(i+1)F_{m-2}\in A_{\ell+1}$.\\
  Let $|\{i\in\{0,\dots,F_m-1\}\}: s(i)=1\}|=j$. Since $0 F_{m-2} \in A_0$, this assumptions implies $F_mF_{m-2}\in A_j$. We also have the trivial inequalities
    $$F_{m-2}F_{m}\leq F_mF_{m-2}<(F_{m-2}+1)F_{m}; $$
    i.e. $F_mF_{m-2}\in A_{F_{m-2}}$. This implies $j=F_{m-2}$, which yields
    $$L=\sum_{i=0}^{F_m-1}s(i)=(F_m-F_{m-2})\varphi+F_{m-2}=F_{m-1}\varphi+F_{m-2}=\varphi^{m-1}$$
    by relation~\eqref{superiden}.
\end{proof}

\begin{lem} \label{translated} 
  Let $3\leq k\leq m-1$ and $r\in\NN_0$. If $k$ is odd, then
  $$ \sum_{i=r}^{r+F_k-1}s(i)=\begin{cases}
                   \varphi^{k-1}+\varphi^{-1} & \text{if $\pi(r)<F_{m-k}$}, \\
                   \varphi^{k-1} & \text{otherwise.} \\
                              \end{cases}$$      
    If $k$ is even, then
  $$ \sum_{i=r}^{r+F_k-1}s(i)=\begin{cases}
                   \varphi^{k-1}-\varphi^{-1} & \text{if $\pi(r)\geq F_m-F_{m-k}$}, \\
                   \varphi^{k-1} & \text{otherwise.} \\
                              \end{cases}$$
                
\end{lem}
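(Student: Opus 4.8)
The plan is to compute the sum $S(r,k):=\sum_{i=r}^{r+F_k-1}s(i)$ by counting how many indices $i$ in the block $\{r,r+1,\dots,r+F_k-1\}$ have $s(i)=\varphi$, just as in the proof of Lemma~\ref{sumofgaps}. Recall that $s(i)=\varphi$ precisely when $\pi(i)<\pi(i+1)$, which by the ``essential observation'' recalled there happens exactly when $iF_{m-2}$ and $(i+1)F_{m-2}$ lie in the same block $A_\ell$, and $s(i)=1$ exactly when they lie in consecutive blocks $A_\ell, A_{\ell+1}$. Thus, if we let $q$ be the number of ``block-jumps'' that occur as $i$ runs over $\{r,\dots,r+F_k-1\}$, i.e.\ the number of $\ell$ with the property that $iF_{m-2}\in A_\ell$ and $(i+1)F_{m-2}\in A_{\ell+1}$ for some $i$ in that range, then $S(r,k) = (F_k-q)\varphi + q = \varphi F_k + q(1-\varphi) = \varphi F_k - q\varphi^{-1}$, using $1-\varphi=-\varphi^{-1}$. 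Since the index set has length $F_k$ and $F_{m-2}$ is coprime to $F_m$, the quantity $(r+F_k)F_{m-2}-rF_{m-2}=F_kF_{m-2}$ determines $q$ up to $\pm 1$: writing $F_kF_{m-2}=aF_m+b$ with $0\le b<F_m$, the number of multiples of $F_m$ in the half-open interval $(rF_{m-2},\,(r+F_k)F_{m-2}]$ is either $a$ or $a+1$, depending on the residue $\pi(r)=rF_{m-2}\bmod F_m$.

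Next I would pin down $a$ and $b$ using the Fibonacci identity $F_kF_{m-2}=\dots$; the relevant fact is the d'Ocagne / convolution-type identity giving $F_kF_{m-2} \equiv (-1)^{?}F_{m-k}\pmod{F_m}$ (more precisely $F_kF_{m-2}+(-1)^k F_{m-k}=F_{m-2+k}$ truncated appropriately, so that modulo $F_m$ one gets $b=F_{m-k}$ when $k$ is odd and $b=F_m-F_{m-k}$ when $k$ is even), together with $\varphi F_k = $ an expression one simplifies via~\eqref{superiden}: indeed $\varphi F_k = \varphi^k - F_{k-1}$, and combining with the integer $a$ one should land on $\varphi^{k-1}$ as the ``typical'' value of $S(r,k)$. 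Concretely: $q$ counts multiples of $F_m$ in $(rF_{m-2},(r+F_k)F_{m-2}]$; there are exactly $a$ of them if $b\le F_m-\pi(r)$ wraps in the expected way, and $a+1$ otherwise. For $k$ odd, $b=F_{m-k}$ is small, and the extra jump occurs iff $\pi(r)<F_{m-k}$, giving $q=a+1$, hence $S=\varphi F_k-(a+1)\varphi^{-1}$; otherwise $q=a$ and $S=\varphi F_k-a\varphi^{-1}$. One checks $\varphi F_k - a\varphi^{-1}=\varphi^{k-1}$ in the generic case (this is the pure Fibonacci/$\varphi$ bookkeeping), so the small case becomes $\varphi^{k-1}-\varphi^{-1}$... and here one must be careful with the sign of the correction: re-examining, when $b=F_{m-k}$ the ``base'' count is $a$ and the condition $\pi(r)<F_{m-k}$ \emph{adds} a jump, which \emph{decreases} $S$ by $\varphi^{-1}$; so the generic value must itself be $\varphi^{k-1}$ only after the correct normalization — I would fix the off-by-one by evaluating $S$ at a single convenient $r$ (e.g.\ $r=0$, where $\pi(0)=0$) and propagating. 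The even case is handled symmetrically with $b=F_m-F_{m-k}$ large, so that the ``extra'' jump is the generic situation and its absence (when $\pi(r)\ge F_m-F_{m-k}$) subtracts $\varphi^{-1}$.

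The main obstacle I expect is getting the direction of the $\pm\varphi^{-1}$ correction and the precise threshold condition right, i.e.\ correctly translating ``a multiple of $F_m$ falls in the half-open interval $(\pi(r),\pi(r)+b]$ modulo $F_m$'' into the stated inequalities $\pi(r)<F_{m-k}$ (odd $k$) and $\pi(r)\ge F_m-F_{m-k}$ (even $k$), and checking that the constraint $3\le k\le m-1$ guarantees $0<F_{m-k}<F_m$ so the two cases are genuinely exhaustive and the boundary residues are handled consistently with the convention that $(0,0)\in\widetilde{\mathcal F}_m$. Once the counting of block-jumps is set up cleanly and anchored at one value of $r$, the rest is the same $\varphi$-versus-Fibonacci arithmetic as in Lemma~\ref{sumofgaps}, using only $\varphi^2=\varphi+1$ and~\eqref{superiden}.
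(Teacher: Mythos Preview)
Your approach is exactly the paper's: count the number $q$ of indices $i\in\{r,\dots,r+F_k-1\}$ with $s(i)=1$ as the number of multiples of $F_m$ in $(rF_{m-2},(r+F_k)F_{m-2}]$, then write $S=(F_k-q)\varphi+q$ and use the identity $F_kF_{m-2}-F_{k-2}F_m=(-1)^kF_{m-k}$ to determine $q$ and the threshold on $\pi(r)$.

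The only slip is that you have the parities of $b$ swapped, and this is precisely the source of the sign confusion you flag. From the identity, for \emph{odd} $k$ one has $F_kF_{m-2}=(F_{k-2}-1)F_m+(F_m-F_{m-k})$, so $a=F_{k-2}-1$ and $b=F_m-F_{m-k}$; for \emph{even} $k$ one has $a=F_{k-2}$ and $b=F_{m-k}$. With the correct $b$ for odd $k$, the condition ``no extra multiple of $F_m$'' becomes $\pi(r)+b<F_m$, i.e.\ $\pi(r)<F_{m-k}$, and then $q=a=F_{k-2}-1$ is one \emph{fewer} jump than the generic $F_{k-2}$, giving $S=(F_{k-1}+1)\varphi+(F_{k-2}-1)=\varphi^{k-1}+(\varphi-1)=\varphi^{k-1}+\varphi^{-1}$ with the $+$ sign as stated. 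The even case is symmetric. Once the parity is straight, no anchoring at $r=0$ is needed; the paper simply sandwiches $jF_m$ between explicit Fibonacci bounds to read off $j\in\{F_{k-2}-1,F_{k-2}\}$ (odd $k$) or $j\in\{F_{k-2},F_{k-2}+1\}$ (even $k$) and then identifies which alternative holds by the same inequality on $\pi(r)$.
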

\begin{proof}
 First we observe that 
   \begin{equation} \label{relation}
       F_kF_{m-2}-F_{k-2}F_m=(-1)^k F_{m-k}.
   \end{equation} 
   This follows from the well-known identity $F_pF_{q+1}-F_qF_{p+1}=(-1)^qF_{p-q}$ for nonnegative integers $p\geq q$, which yields
   $$ F_{k-2}F_m-F_kF_{m-2}=(F_k-F_{k-1})F_m-F_k(F_m-F_{m-1})=F_{m-1}F_{k}-F_{k-1}F_{m}=(-1)^{k-1}F_{m-k}.$$
    Let us assume that $|\{i\in\{r,\dots,r+F_k-1\}\}: s(i)=1\}|=j$ and that $r F_{m-2} \in A_{\eta}$; i.e.
   \begin{equation*}
       \eta F_{m} \leq r F_{m-2} < (\eta+1)F_{m},
   \end{equation*}
   or equivalently
      \begin{equation} \label{etabounds}
       0 \leq r F_{m-2}-\eta F_{m} < F_{m}.
   \end{equation}
  With the arguments as in the proof of the previous lemma these assumptions imply $(F_k+r)F_{m-2}\in A_{\eta+j}$; i.e.
   $$ (\eta+j)F_m \leq (F_k+r)F_{m-2} < (\eta+j+1)F_m.$$
   We solve these inequalities for $jF_m$ and obtain
   \begin{equation} \label{ineqj}
        (F_k+r)F_{m-2}-(\eta+1)F_m < j F_m \leq (F_k+r)F_{m-2}-\eta F_m.
   \end{equation} 
   Using~\eqref{relation} and~\eqref{etabounds}, for odd $k$ we can bound the right-hand-side of~\eqref{ineqj} from above by
    $$ F_mF_{k-2}-F_{m-k}+F_{m}<(F_{k-2}+1)F_m $$
   and the left-hand-side of~\eqref{ineqj} from below by
   $$ F_mF_{k-2}-F_{m-k}-F_m>(F_{k-2}-2)F_m. $$
   This yields $j=F_{k-2}-1$ or $j=F_{k-2}$. 
     If $k$ is an even number, then 
  we can bound the right-hand-side of~\eqref{ineqj} from above by
   $$ F_mF_{k-2}+F_{m-k}+F_{m}<(F_{k-2}+2)F_m $$
   and the left-hand-side of~\eqref{ineqj} from below by
   $$ F_mF_{k-2}+F_{m-k}-F_m>(F_{k-2}-1)F_m. $$
   This yields $j=F_{k-2}$ or $j=F_{k-2}+1$. 
   Hence if $k$ is odd we either have
   $$ \sum_{i=r}^{r+F_k-1}s(i)=F_{k-1}\varphi+F_{k-2}=\varphi^{k-1} $$
   or 
   \begin{equation} \label{oddk}
       \sum_{i=r}^{r+F_k-1}s(i)=(F_{k-1}+1)\varphi+(F_{k-2}-1)=\varphi^{k-1}+\varphi^{-1}, 
   \end{equation} 
   whereas for even $k$ we either have
   $$ \sum_{i=r}^{r+F_k-1}s(i)=F_{k-1}\varphi+F_{k-2}=\varphi^{k-1} $$
   or 
   $$ \sum_{i=r}^{r+F_k-1}s(i)=(F_{k-1}-1)\varphi+(F_{k-2}+1)=\varphi^{k-1}-\varphi^{-1}. $$
  Next we investigate for which $r$ we get the larger result~\eqref{oddk} for odd $k$. This happens if and only if $(F_k+r)F_{m-2}\in A_{\eta+F_{k-2}-1}$; i.e. if
  $$ (F_k+r)F_{m-2}< (\eta+F_{k-2})F_m. $$
  Using relation~\eqref{relation} we find the equivalent inequality
  $$ rF_{m-2}-\eta F_m<F_{m-k}. $$
  The fact that $\eta=\lfloor\frac{rF_{m-2}}{F_m} \rfloor$ yields the equivalent inequality
  $$ F_m\left\{\frac{rF_{m-2}}{F_m}\right\} < F_{m-k}, $$
 i.e. $\pi(r)< F_{m-k}$. \\
  Similarly we show for which $r$ we have $\sum_{i=r}^{r+F_k-1}s(i)=\varphi^{k-1}-\varphi^{-1}$ in the case of even $k$. This is the case if and only if $(F_k+r)F_{m-2}\in A_{\eta+F_{k-2}+1}$; i.e. if
  $$ (F_k+r)F_{m-2}\geq (\eta+F_{k-2}+1)F_m. $$
  With relation~\eqref{relation} we transform this inequality into
  $$ rF_{m-2}-\eta F_m\geq F_m-F_{m-k}; $$
  i.e. $\pi(r)\geq F_m- F_{m-k}$.
\end{proof}

\begin{lem} \label{pointslow} 
If $3\leq k \leq m-1$ is odd and $r\in\NN_0$ such that $\pi(r)<F_{m-k}$, then we have $\pi(r+F_k)=\pi(r)+F_m-F_{m-k}$. \\
If $3\leq k \leq m-1$ is even and $r\in\NN_0$  such that $\pi(r)\geq F_m-F_{m-k}$, then we have $\pi(r+F_k)=\pi(r)-(F_m-F_{m-k})$.
\end{lem}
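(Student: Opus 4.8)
The plan is to reduce the whole statement to a congruence modulo $F_m$ and then pick out the correct representative using the hypothesis on $\pi(r)$. Recall that $\pi(n)$ is by definition the unique element of $\{0,1,\dots,F_m-1\}$ with $\pi(n)\equiv nF_{m-2}\pmod{F_m}$, so that
\[ \pi(r+F_k)\equiv (r+F_k)F_{m-2}\equiv \pi(r)+F_kF_{m-2}\pmod{F_m}. \]
By relation~\eqref{relation} we have $F_kF_{m-2}=F_{k-2}F_m+(-1)^kF_{m-k}$, hence $F_kF_{m-2}\equiv(-1)^kF_{m-k}\pmod{F_m}$ and therefore $\pi(r+F_k)\equiv \pi(r)+(-1)^kF_{m-k}\pmod{F_m}$. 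It then suffices to check, in each of the two cases, that the integer asserted in the statement is congruent to this right-hand side and also lies in $\{0,1,\dots,F_m-1\}$; since $\pi(r+F_k)$ is the unique integer with both properties, equality follows.

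For odd $k$ the congruence reads $\pi(r+F_k)\equiv \pi(r)-F_{m-k}\pmod{F_m}$, and the candidate $\pi(r)+F_m-F_{m-k}$ differs from $\pi(r)-F_{m-k}$ by $F_m$, so the congruence holds. For the range: since $3\le k\le m-1$ and $m\ge 5$ we have $0<F_{m-k}\le F_{m-3}<F_m$, so the hypothesis $0\le \pi(r)<F_{m-k}$ gives $0\le F_m-F_{m-k}\le \pi(r)+F_m-F_{m-k}<F_m$. For even $k$ the congruence reads $\pi(r+F_k)\equiv \pi(r)+F_{m-k}\pmod{F_m}$, and the candidate $\pi(r)-(F_m-F_{m-k})$ differs from $\pi(r)+F_{m-k}$ by $F_m$; the hypothesis $\pi(r)\ge F_m-F_{m-k}$ gives $\pi(r)-(F_m-F_{m-k})\ge 0$, while $\pi(r)\le F_m-1$ gives $\pi(r)-(F_m-F_{m-k})\le F_{m-k}-1<F_m$. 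In both cases the candidate lies in the fundamental domain, which completes the proof.

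There is no serious obstacle here: once relation~\eqref{relation} is available (it was already established in the proof of Lemma~\ref{translated}), the identity is purely a matter of picking the correct residue, and the two hypotheses on $\pi(r)$ are precisely what forces that residue into $\{0,\dots,F_m-1\}$. As an alternative one can read the conclusion off directly from the final paragraph of the proof of Lemma~\ref{translated}, where for odd $k$ the condition $\pi(r)<F_{m-k}$ was shown to be equivalent to $(F_k+r)F_{m-2}\in A_{\eta+F_{k-2}-1}$ and for even $k$ the condition $\pi(r)\ge F_m-F_{m-k}$ to $(F_k+r)F_{m-2}\in A_{\eta+F_{k-2}+1}$; translating this membership back into the value of $\pi(r+F_k)$ yields the stated formulas.
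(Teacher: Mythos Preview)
Your proof is correct and follows essentially the same approach as the paper: both arguments use relation~\eqref{relation} to determine $(r+F_k)F_{m-2}$ modulo $F_m$ and then invoke the hypothesis on $\pi(r)$ to select the correct representative. The only cosmetic difference is that the paper phrases this via fractional parts (showing $\{rF_{m-2}/F_m\}-F_{m-k}/F_m\in(-1,0)$ and then using $\{x\}=x+1$), whereas you phrase it directly as a congruence and a range check; these are two ways of saying the same thing.
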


\begin{proof}
 We consider odd numbers $k$. The inequality $\pi(r)<F_{m-k}$ is equivalent to
     \begin{equation}
         \label{rbound} \left\{\frac{rF_{m-2}}{F_m}\right\}<\frac{F_{m-k}}{F_m}.
     \end{equation} 
     We employ~\eqref{relation} once again to obtain
     \begin{align*}
         \pi(r+F_k)=& F_m  \left\{\frac{(r+F_k)F_{m-2}}{F_m}\right\}=F_m  \left\{\frac{rF_{m-2}+F_{m}F_{k-2}-F_{m-k}}{F_m}\right\} \\
         =& F_m  \left\{\frac{rF_{m-2}-F_{m-k}}{F_m}\right\}=F_m  \left\{\left\{\frac{rF_{m-2}}{F_m}\right\}-\frac{F_{m-k}}{F_m}\right\},
     \end{align*}
     since the fractional part does not change by adding or removing integers. Since
     $$ \left\{\frac{rF_{m-2}}{F_m}\right\}-\frac{F_{m-k}}{F_m}<0 $$
     by~\eqref{rbound} and
     $$  \left\{\frac{rF_{m-2}}{F_m}\right\}-\frac{F_{m-k}}{F_m}\geq -\frac{F_{m-k}}{F_m}>-1 $$
     we have 
     $$  \left\{\frac{rF_{m-2}}{F_m}\right\}-\frac{F_{m-k}}{F_m} \in (-1,0). $$
     For real numbers $x\in (-1,0)$ we clearly have $\{x\}=x+1$, which yields
     $$\pi(r+F_k)=F_m\left(\left\{\frac{rF_{m-2}}{F_m}\right\}-\frac{F_{m-k}}{F_m}+1\right)=\pi(r)+F_m-F_{m-k}  $$
     and the first part of the lemma is verified. The proof of the second part is similar.
     \end{proof}

\section{Proof of Theorem~\ref{precise}} \label{proof}

First we adapt the proof in \cite{Bren} to get the following proposition:

\begin{prop} \label{boxshape}
The maximal periodic boxes amidst the points of $\widetilde{\mathcal F}_m$ are of the form
\[
(x_r,x_{r+F_k})\times (x_s,x_{s+F_{m-k+3}})
\]
for $k=3,\dots,m$ and certain integers $r,s\geq 0$ and where the indices are taken modulo $F_m$.
\end{prop}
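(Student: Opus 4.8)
The plan is to reduce the statement to the corresponding fact for the ordinary Fibonacci lattice $\mathcal F_m$ and then adapt the argument of \cite{Bren}. The point that makes the reduction essentially free is that \emph{whether a periodic box is empty, and whether it is maximal, depends only on the permutation $\pi$ and on the cyclic order of the indices $0,1,\dots,F_m-1$, not on the gap lengths $s(i)$}. Concretely, after the routine reduction to boxes whose four sides lie on grid lines (the remaining full-height and full-width strips have area at most $\varphi^{2-m}<\varphi^{3-m}$ and can be disposed of separately), a maximal periodic box amidst $\widetilde{\mathcal F}_m$ has the form $I(x_r/L,x_{r'}/L)\times I(x_s/L,x_{s'}/L)$; writing $J_1=\{r,r+1,\dots,r'-1\}$ and $J_2=\{s,s+1,\dots,s'-1\}$ for the index-intervals it spans (indices mod $F_m$), emptiness is equivalent to $\pi(J_1)\cap J_2=\emptyset$ and non-extendability in the four directions is equivalent to $\pi(r-1)\in J_2$, $\pi(r')\in J_2$, $\pi^{-1}(s-1)\in J_1$ and $\pi^{-1}(s')\in J_1$. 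These are verbatim the conditions one writes for $\mathcal F_m=\{(k/F_m,\pi(k)/F_m)\}$, so $I(x_r/L,x_{r'}/L)\times I(x_s/L,x_{s'}/L)$ is maximal amidst $\widetilde{\mathcal F}_m$ exactly when $[r/F_m,r'/F_m)\times[s/F_m,s'/F_m)$ is maximal amidst $\mathcal F_m$, and it has the shape claimed in the proposition precisely when the latter has $x$- and $y$-index-widths $F_k$ and $F_{m-k+3}$.

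It therefore remains to prove, for $\mathcal F_m$, that the index-widths $(r'-r,\,s'-s)$ of a maximal periodic box always form a pair $(F_k,F_{m-k+3})$ with $k\in\{3,\dots,m\}$ --- which is the structure underlying \cite[Theorems 6.2 and 6.10]{Bren} --- and I would reproduce (and lightly adapt) their argument. The arithmetic core concerns the map $j\mapsto\pi(j)=jF_{m-2}\bmod F_m$: its orbit is an arithmetic progression modulo $F_m$ with common difference $F_{m-2}$, and the continued fraction $F_{m-2}/F_m=[0;2,1,\dots,1]$ has convergent denominators $F_2,F_3,\dots,F_m$. By the three-distance theorem, for a window $J_1$ of $t$ consecutive indices the set $\pi(J_1)$ --- a cyclic translate of $\{0,F_{m-2},2F_{m-2},\dots,(t-1)F_{m-2}\}\bmod F_m$ --- partitions $\ZZ/F_m\ZZ$ into arcs of at most three lengths, with exactly two when $t$ equals one of the denominators $F_k$. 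Imposing the non-extendability conditions, which relative to $\pi(J_1)$ demand that the neighbouring points $\pi(r-1)=\pi(r)-F_{m-2}$ and $\pi(r')=\pi(r)+tF_{m-2}$ both fall into a single arc $J_2$ and that $J_2$ be a maximal such arc, then forces first that $t$ must be a Fibonacci number $F_k$ and afterwards that the complementary index-width equals $F_{m-k+3}$, with the shift $+3$ tracing back to the identity~\eqref{relation}. In the modified lattice the same bookkeeping goes through unchanged, while the explicit gap sums one needs are supplied by Lemmas~\ref{translated} and~\ref{pointslow} (and these are exactly what will be used afterwards to read off the areas in Theorem~\ref{precise}).

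The step I expect to be the genuine obstacle is this last one: turning the three-distance structure into the \emph{sharp} conclusion that the only admissible pairs are $(F_k,F_{m-k+3})$, rather than merely an inequality on the areas. Tightly linked to it are the extreme cases $k=3$ and $k=m$, where one index-width becomes $F_m$ and the associated periodic interval wraps the whole torus; here one has to be careful both with the half-open conventions in the definition of $\mathcal B_{\mathbb T}$ and with the thin full-height (respectively flat full-width) strips, whose area is a supremum that is approached but not attained. Everything else --- the reduction to grid-aligned boxes, the passage between $\widetilde{\mathcal F}_m$ and $\mathcal F_m$, and checking that every pair $(F_k,F_{m-k+3})$ is actually realised by suitable $r,s$ --- should be routine.
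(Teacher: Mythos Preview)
Your proposal is correct and follows essentially the same route as the paper: both arguments first observe that emptiness and maximality of a periodic box depend only on the permutation $\pi$ and the cyclic index structure (so there is a one-to-one correspondence between maximal periodic boxes of $\mathcal F_m$ and of $\widetilde{\mathcal F}_m$), and then invoke the gap analysis of Breneis--Hinrichs to pin down the index-widths as $(F_k,F_{m-k+3})$. The paper is somewhat more economical in that it simply quotes the explicit three-gap count from \cite[Lemma~6.8]{Bren} (for $\ell=F_k-j$ the gaps in $\{\pi(t),\dots,\pi(t+\ell-1)\}$ are $F_{m-k+3}$, $F_{m-k+2}$, $F_{m-k+1}$ with multiplicities $j$, $F_{k-1}-j$, $F_{k-2}-j$) and reads off that the largest gap drops from $F_{m-k+3}$ to $F_{m-k+2}$ exactly when $\ell$ passes from $F_k-1$ to $F_k$; this sidesteps the non-extendability bookkeeping you set up and the separate worry about the $k\in\{3,m\}$ endpoints, which in the paper's formulation are absorbed into the same lemma.
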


\begin{proof}
This has essentially been proven in~\cite{Bren} but has not been stated there explicitly. We show how to derive this result from~\cite[Lemma 6.8]{Bren}. We consider the function
  $\pi$ from Definition~\ref{def1}. Obviously $\pi$ is $F_m$-periodic; i.e. $\pi(k+F_m)=\pi(k)$ for every $k\geq 0$. For integers $1\leq \ell\leq F_m$ and $t\geq 0$ we define sequences
   $$ Y_{\ell,t}=(\pi(k))_{k=t}^{t+\ell-1} $$
   consisting of $\ell$ distinct integers in $\{0,1,\dots,F_m-1\}$.
   Let us consider a set $Y_{\ell,0}$. We order the $\ell$ numbers in this set from the lowest to the highest. We define the distance between two integers $a,b\in\{0,1,\dots,F_m-1\}$ by $d(a,b)=b-a$ if $a\leq b$ and $d(a,b)=F_m+a-b$ if $a>b$. Then it is known that there occur at most three different distances between two consecutive elements in $Y_{\ell,0}$, where we also consider the highest and the lowest number as consecutive with respect to the distance notion $d$. More precisely, Breneis and Hinrichs could prove that for $\ell=F_{k}-j$ with $1\leq j \leq F_{k-2}$ and $3\leq k \leq m$ these three distances are $F_{m-k+3}$, which occurs $j$ times, then $F_{m-k+2}$, which occurs $F_{k-1}-j$ times and finally $F_{m-k+1}$,  which occurs $F_{k-2}-j$ times. This follows by the equalities (3.1) and (3.3) in~\cite[Lemma 8]{Bren} by setting $n=F_m$ and $q=F_{m-2}$ and using relation~\eqref{relation}. The same distances occur in the same frequency between consecutive elements of $Y_{F_{k}-j,t}$ for any integer $t\geq 0$, since  $ d(\pi(a),\pi(b))=d(\pi(a+t),\pi(b+t)) $ for any $t\geq 0$; i.e. the distance between $\pi(a)$ and $\pi(b)$ is invariant with respect to equal translations of $a$ and $b$. The crucial implication is the following: For $\ell=F_k-1$ there exists still one gap in $Y_{\ell,t}$ of length $F_{m-k+3}$, whereas for $\ell=F_k$ the largest gap is only $F_{m-k+2}$. Since there is an one-to-one correspondence between the maximal (periodic) boxes of $\mathcal{F}_m$ and those of $\widetilde{\mathcal{F}}_m$ the maximal empty periodic boxes amidst $\widetilde{\mathcal{F}}_m$ are of the form
     $ (x_a,x_{b})\times (x_{\pi(c)},x_{\pi(d)}) $
     for certain $a,b,c,d \in \{0,1,\dots,F_m-1\}$ such that $d(a,b)=F_k$ and $d(\pi(c),\pi(d))= F_{m-k+3}$. This yields the result.
   \end{proof}

We are ready to show Theorem~\ref{precise}.

\begin{proof}[Proof of Theorem~\ref{precise}]
 Consider a maximal periodic empty box amidst the points of $\widetilde{\mathcal F}_m$, which by Proposition~\ref{boxshape} is of the form
$
(x_{r},x_{r+F_k})\times (x_{s},x_{s+F_{m-k+3}})
$
for some $k=3,\dots,m$ and integers $r,s\geq 0$ and where the indices are taken modulo $F_m$. By Lemma~\ref{translated} we know that in many cases the (torus) distance of $x_{r}$ and $x_{r+F_k}$ is $\varphi^{k-1}/L$ and the distance of $x_{s}$ and $x_{s+F_{m-k+3}}$ is $\varphi^{m-k+2}/L$. In this case the size of the chosen box is exactly
     $$ \frac{\varphi^{k-1}\varphi^{m-k+2}}{L^2}=\varphi^{3-m}. $$
The size of the box $(x_{r},x_{r+F_k})\times (x_{s},x_{s+F_{m-k+3}})$ can only have a different size if $k$ is odd and $\pi(r)<F_{m-k}$ or if $k$ is even and $\pi(r)\geq F_m-F_{m-k}$ or if similar instances apply for $s$ according to Lemma~\ref{translated}. We will show in the following that such boxes are necessarily periodic boxes wrapping around one side of the unit square, which yields that all maximal interior boxes amidst $\widetilde{\mathcal F}_m$ have volume $\varphi^{3-m}$.  Due to certain symmetries of  $\widetilde{\mathcal{F}}_m$ it is enough to only consider the cases where the (torus) distance of $x_{r}$ and $x_{r+F_k}$ is different from $\varphi^{k-1}/L$. These symmetries as explained in the following imply that for every maximal empty periodic box amidst $\widetilde{\mathcal{F}}_m$ that wraps around the $x$-axis there is exactly one periodic box of same size that wraps around the $y$-axis.
 \begin{itemize}
     \item Let $m$ be even. Then $(x_k,x_l)\in \widetilde{\mathcal{F}}_m$ implies that $(x_{l},x_k)\in \widetilde{\mathcal{F}}_m$; i.e. $\widetilde{\mathcal{F}}_m$ is symmetric with respect to the first median.
     \item Let $m$ be odd. Then $(x_k,x_{l})\in \widetilde{\mathcal{F}}_m\setminus\{0\}$ implies that $(x_{l},x_{F_m-k})\in \widetilde{\mathcal{F}}_m\setminus\{0\}$. Geometrically that means than for odd $m$ the point set $\widetilde{\mathcal{F}}_m\setminus\{0\}$ is invariant with respect to a counter-clockwise quarter rotation of the unit square and a subsequent translation of all points a distance $\varphi^{-1}$ in positive $x$-direction.
 \end{itemize}
 The properties of $\widetilde{\mathcal{F}}_m$ as explained in the previous two points can  be seen as follows: The condition $(x_k,x_l)\in \widetilde{\mathcal{F}}_m$ yields $l=\pi(k)$. Therefore $(x_{l},x_k)\in \widetilde{\mathcal{F}}_m$ can only hold if $\pi(\pi(k))=k$. Hence we must show that $\pi(\pi(k))=k$ for all $k\in\{0,1,\dots,F_m-1\}$ if $m$ is even, which is equivalent to $F_{m-2}^2 \equiv 1 \pmod {F_m}$. The validity of this congruence follows from the identity
 $$ F_{m-2}^2+F_m^2-3F_{m}F_{m-2}=(-1)^m, $$
 which holds for all $m\geq 3$ and can be shown by induction on $m$. From the same identity we derive
 $\pi(\pi(k))=F_m-k$ for all $k\in\{1,\dots,F_m-1\}$ if $m$ is odd, which yields that $(x_k,x_{l})\in \widetilde{\mathcal{F}}_m\setminus\{0\}$ implies that $(x_{l},x_{F_m-k})\in \widetilde{\mathcal{F}}_m\setminus\{0\}$ for odd $m$. Thus we are left to verify that boxes with width not equal to $\varphi^{k-1}$ are exterior boxes and have area  smaller than $\varphi^{3-m}$.
  \begin{figure}[h] 
     \centering
     {\includegraphics[width=70mm]{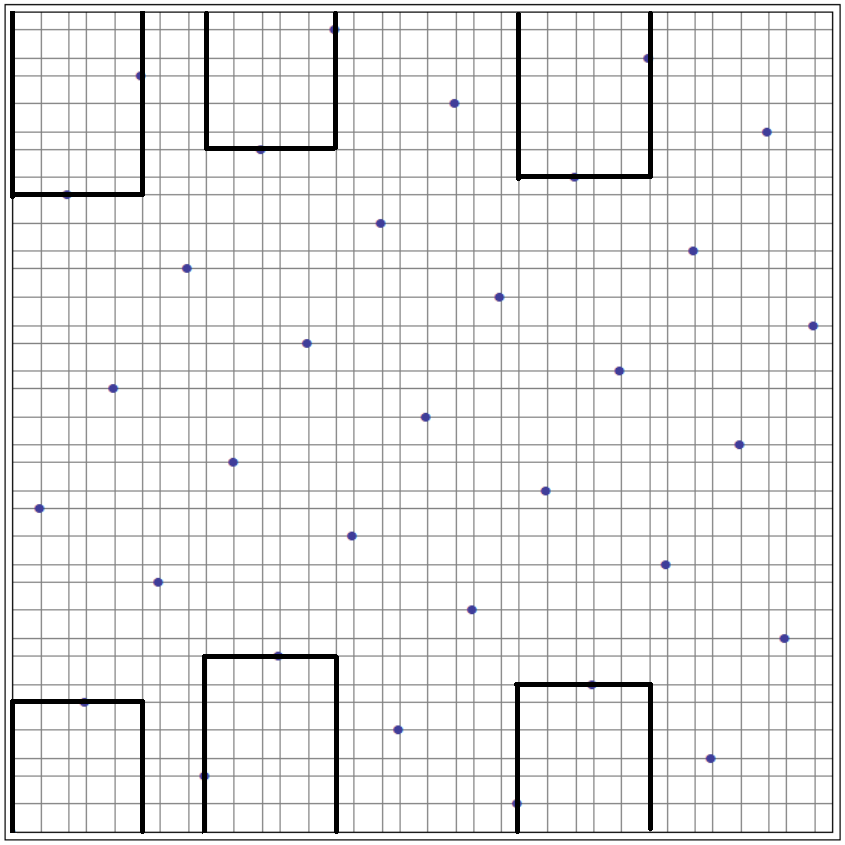}}
     \hspace{.1in}
     {\includegraphics[width=70mm]{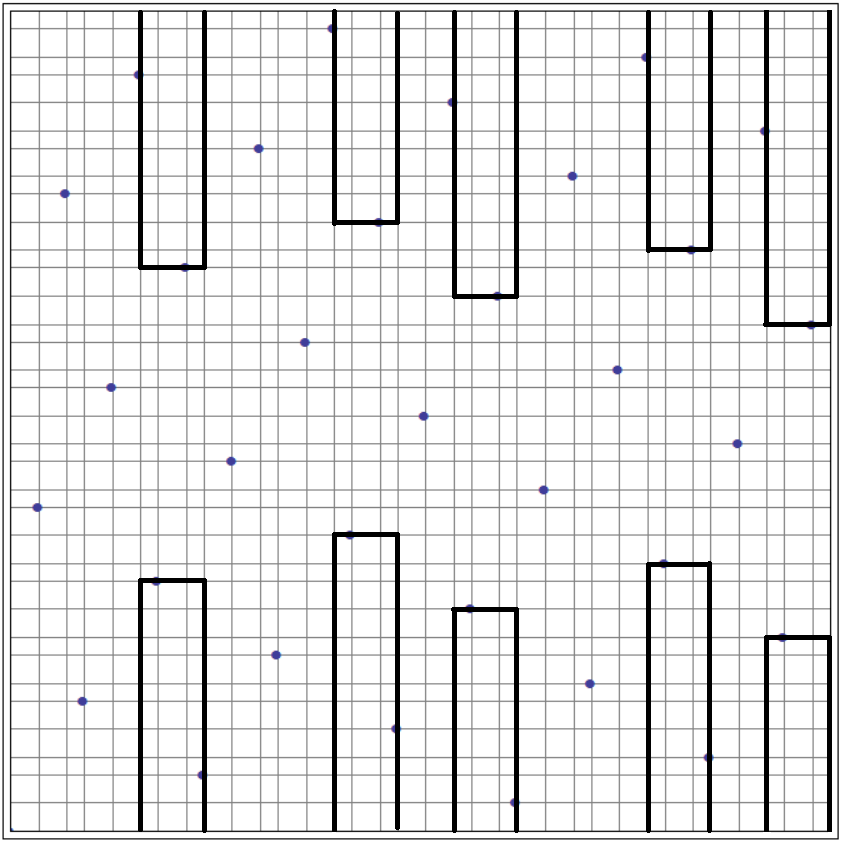}}
     \caption{Left: Maximal empty boxes amidst $\widetilde{\mathcal{F}}_9$ of width $\varphi^{k-1}+\varphi^{-1}$ (here fore $k=5$) are necessarily strict periodic boxes. Right: The same applies for maximal empty boxes of width $\varphi^{k-1}-\varphi^{-1}$ (here for $k=4$). For every periodic box drawn in the picture there is a corresponding periodic box of the same size that wraps around the left and right sides of the unit square.}
     \label{periodic}
\end{figure}

Let us first assume that $\pi(r)< F_{m-k}$ for an odd $k>3$ (we will treat the case $k=3$ below) such that the (torus) distance of $x_{r}$ and $x_{r+F_k}$ is $(\varphi^{k-1}+\varphi^{-1})/L$. Then by Lemma~\ref{pointslow} we know $\pi(r+F_k)=\pi(r)+F_m-F_{m-k}$. Therefore between the $y$-coordinates of the points $(x_{r},x_{\pi(r)})$ and $(x_{r+F_k},x_{\pi(r+F_k)})$ which bound the box from the left and right side, respectively, we have
 $F_m-F_{m-k}>F_{m-k+3}$ steps. (By steps we understand distances of length $s(i)/L$ for certain consecutive indices $i\in\NN_0$.) The latter inequality holds because
 $$ F_{m-k+3}+F_{m-k}\leq F_{m-1}+F_{m-4}<F_{m-1}+F_{m-2}=F_m.$$  This distance of more than $F_{m-k+3}$ steps is too large for the box to be a non-periodic box; so boxes of the form $(x_{r},x_{r+F_k})\times (x_{s},x_{s+F_{m-k+3}})$ where $\pi(r)<F_{m-k}$ and $k$ is odd are strict periodic boxes wrapping around the $x$-axis. We need to estimate the (non-periodic) parts of this box at the bottom and at the top (see Figure~\ref{periodic}). We consider the sequence $Y=\{x_{\pi(r+1)},\dots,x_{\pi(r+F_k-1)}\}$, which are the $y$-coordinates of the $F_k-1$ points of $\widetilde{\mathcal{F}}_m$ whose $x$-coordinates are $\{x_{r+1},\dots,x_{F_k-1}\}$; i.e. of the points between the points which bound the empty box we consider from the left and right, respectively. By the proof of Proposition~\ref{boxshape} we know that the largest distance between consecutive elements in the nondecreasing reordering of $\{\pi(r+1),\dots,\pi(r+F_k-1)\}$ is $F_{m-k+3}$, which occurs exactly once. If we now include the the index of the $y$-coordinate $x_{\pi(r)}$ of the point which bounds the box from the left side to this set, this largest distance of length $F_{m-k+3}$ is separated into two distances of lengths $F_{m-k+2}$ and $F_{m-k+1}$, respectively. The same applies if we include $\pi(r+F_k)$, i.e. the index of the $y$-coordinate of the point which bounds the box from the right, to the set $\{\pi(r+1),\dots,\pi(r+F_k-1)\}$. But that means that both separated parts of the periodic box can extend at most $F_{m-k+2}$ steps in height, because each part extends at least $F_{m-k+1}$ steps in height. By Lemma~\ref{translated}, the height of both separated parts of the periodic box is at most $(\varphi^{m-k+1}+\varphi^{-1})/L$. Hence the area of both parts is at most
 $$ \frac{(\varphi^{k-1}+\varphi^{-1})(\varphi^{m-k+1}+\varphi^{-1})}{L^2}, $$
 respectively.
 This expression is smaller than $\varphi^{3-m}$ since for $k\in\{4,\dots,m-1\}$ we have
 \begin{align*}
  (\varphi^{k-1}+\varphi^{-1})(\varphi^{m-k+1}+\varphi^{-1}) =& \varphi^{m}+\varphi^{k-2}+\varphi^{m-k}+\varphi^{-2}  \\ 
  \leq& \varphi^{m}+\varphi^{m-3}+\varphi^{m-4}+\varphi^{-2}
  \\ =&\varphi^{m}+\varphi^{m-2}+\varphi^{-2}\\ <&\varphi^{m}+\varphi^{m-2}+\varphi^{m-3}= \varphi^{m+1}.
 \end{align*}
 Similarly we can show that maximal empty boxes with width $\varphi^{k-1}-\varphi^{-1}$ (i.e. where $k$ is even and $\pi(r)\geq F_m-F_{m-k}$) are strictly periodic and are separated into two non-periodic boxes of size less than $\varphi^{3-m}$. \\
 Finally we consider boxes of width $2\varphi/L$ (i.e. where $k=3$ and $\pi(r)< F_{m-3}$). Then with the same arguments as above we find that such a box is strictly periodic. It is bounded from above and below by the point $(x_{r+1},x_{\pi(r+1)})$. Since $x_{\pi(r+1)}=x_{\pi(r)+F_{m-2}}$, we find that the height of both non-periodic parts of the box is at most $\varphi^{m-2}/L$ and their areas are smaller than $\varphi^{3-m}$. 
 That completes the proof of the dispersion result. \\
 It remains to prove that $\lim_{m\to\infty}|\widetilde{\mathcal{F}}_m|\mathrm{disp}(\widetilde{\mathcal{F}}_m)=\frac{\varphi^3}{\sqrt{5}}$.  Since $\varphi^{m}=F_{m}\varphi+F_{m-1}$ and $\lim_{m\to\infty}\frac{F_{m-1}}{F_m}=\varphi^{-1}$, we obtain
$$ \lim_{m\to\infty}|\widetilde{\mathcal{F}}_m|\mathrm{disp}(\widetilde{\mathcal{F}}_m)=\varphi^3 \lim_{m\to\infty} \frac{F_m}{F_m\varphi+F_{m-1}}=\frac{\varphi^3}{\varphi+\varphi^{-1}}=\frac{\varphi^3}{\sqrt{5}}, $$
where we considered $\varphi+\varphi^{-1}=\sqrt{5}$.
\end{proof}

\section{Improvements in dispersion and discrepancy} \label{improve}

    We can improve the dispersion of $\widetilde{\mathcal{F}}_m$ even further by altering the values for $s(0)$ and $s(F_m-1)$.
    It seems reasonable to choose those gaps such that the size of the largest maximal exterior boxes matches the area of the interior boxes. Let $s(0)=s(F_m-1)=x$ for some positive value $x$ we specify below. We set $L=\varphi^{m-1}+2x-\varphi^2$ and $x_k$ for $k=0,1,\dots,F_m-1$ as in Definition~\ref{def1} and define $\widetilde{\mathcal{F}}_m'$ with respect to these choices of $L$ and $x_k$. For small $m$ we found numerically that $x$ must be chosen as follows:
    \begin{itemize}
        \item If $m$ is odd, choose $x$ such that those maximal empty boxes with three sides bounded by the edge of the unit square are of the same size as the interior boxes. This condition yields the value
          $$ x=\frac{2\varphi^{m+1}}{\varphi^{m-1}-\varphi^2+\sqrt{(\varphi^{m-1}-\varphi^2)^2+8\varphi^{m+1}}}. $$
        \item If $m$ is even, choose $x$ such that those maximal empty box in the lower right (or upper left) corner of the point set of width $(x+\varphi)/L$ with two sides bounded by the edge of the unit square is of the same size as the interior boxes. This condition yields the value
          $$ x=\frac12 \left(-(\varphi^{m-2}+\varphi^{-1})+\sqrt{(\varphi^{m-2}+\varphi^{-1})^2+4(\varphi^{m}+\varphi)}\right). $$
    \end{itemize}
    In both cases $x$ as a function of $m$ is increasing and tends to $\varphi^2$. For $m\in\{5,6,7,8,9,10\}$ the point set $\widetilde{\mathcal{F}}_m'$ has a dispersion of $\frac{\varphi^{m+1}}{L^2}$, which is the area of the interior boxes as well as the area of the largest exterior box (we checked by hand that all other maximal exterior boxes are smaller). We conjecture that this dispersion formula holds for all $m\geq 5$. 
    If this were the case, the asymptotic dispersion of $\widetilde{\mathcal{F}}_m'$ would still be the same as for $\widetilde{\mathcal{F}}_m$. Of course $\frac{\varphi^{m+1}}{L^2}$ is a lower bound on $\mathrm{disp}(\widetilde{\mathcal{F}}_m')$.

      \begin{table}[h]
\centering
\begin{tabular}[h]{ |c| c| c |c|  } \hline 
  $m$ & $(F_m-1)\mathrm{disp}(\mathcal{F}_m^*)$ & $(F_m-1) \mathrm{disp}(\widetilde{\mathcal{F}}_m^*)$ & $(F_m-1)\mathrm{disp}(\widetilde{\mathcal{F}}_m'^*)$  \\ \hline
    5 & $1.44$ & $1.52786$ & $1$  \\
	6 & $1.64063$ & $1.65248$ & $1.28438$  \\ 
    7 & $1.77514$ & $1.75078$ & $1.40661$  \\
	8 & $1.81406$ & $1.80340$ & $1.57491$  \\ 
    9 & $1.88408$ & $1.83903$ & $1.66684$  \\ 
	10 & $1.92793$ & $1.85986$ & $1.74963$  \\ 
	12 & $1.97232$ & $1.88125$ & $\textit{(1.83465)}$ \\ 
	15 & $1.99345$ & $1.89132$ & $\textit{(1.87970)}$  \\ 
	25 & $1.99995$ & $1.89440$ & $\textit{(1.89431)}$  \\  
	30 & $1.99999$ & $1.89442$ & $\textit{(1.89442)}$  \\  \hline
\end{tabular}
\caption{Comparison of the dispersions of $\mathcal{F}_m^*$, $\widetilde{\mathcal{F}}_m^*$ and $\widetilde{\mathcal{F}}_m'^*$, where the asterisks indicate that we exclude the point $(0,0)$. Cursive numbers in brackets are conjectured values.}
\label{compdisp}
\end{table}

    \begin{figure}[h] \label{comparisonoflattices}
     \centering
     {\includegraphics[width=70mm]{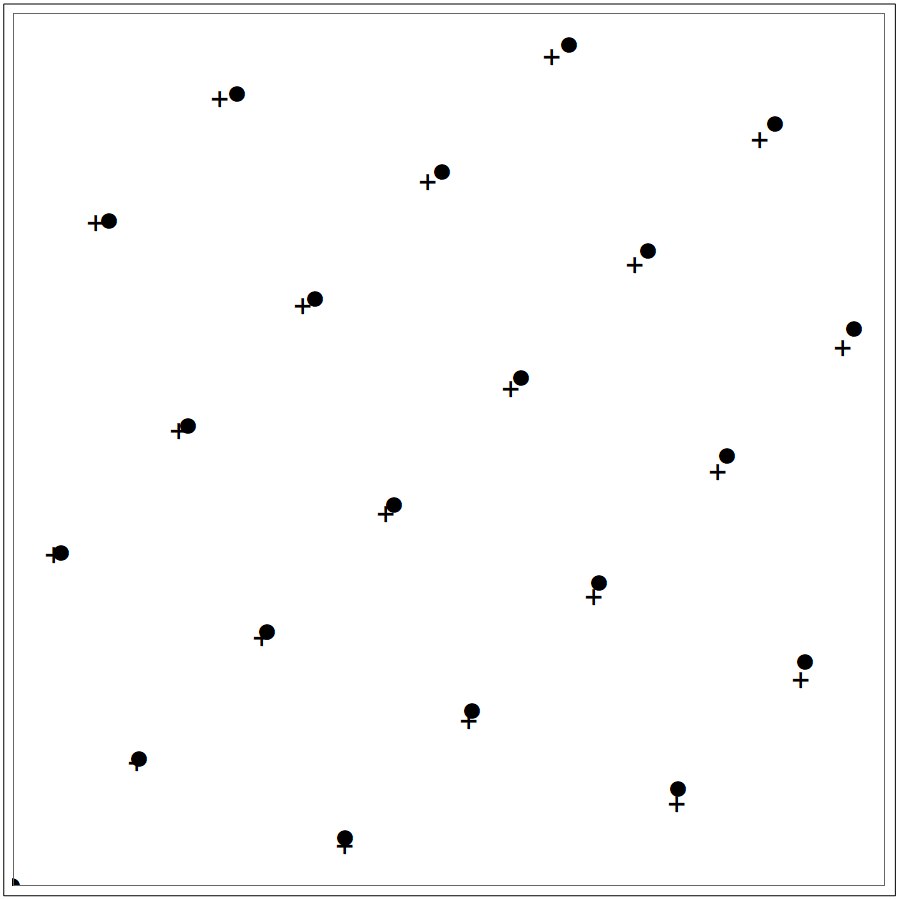}}
     \hspace{.1in}
     {\includegraphics[width=70mm]{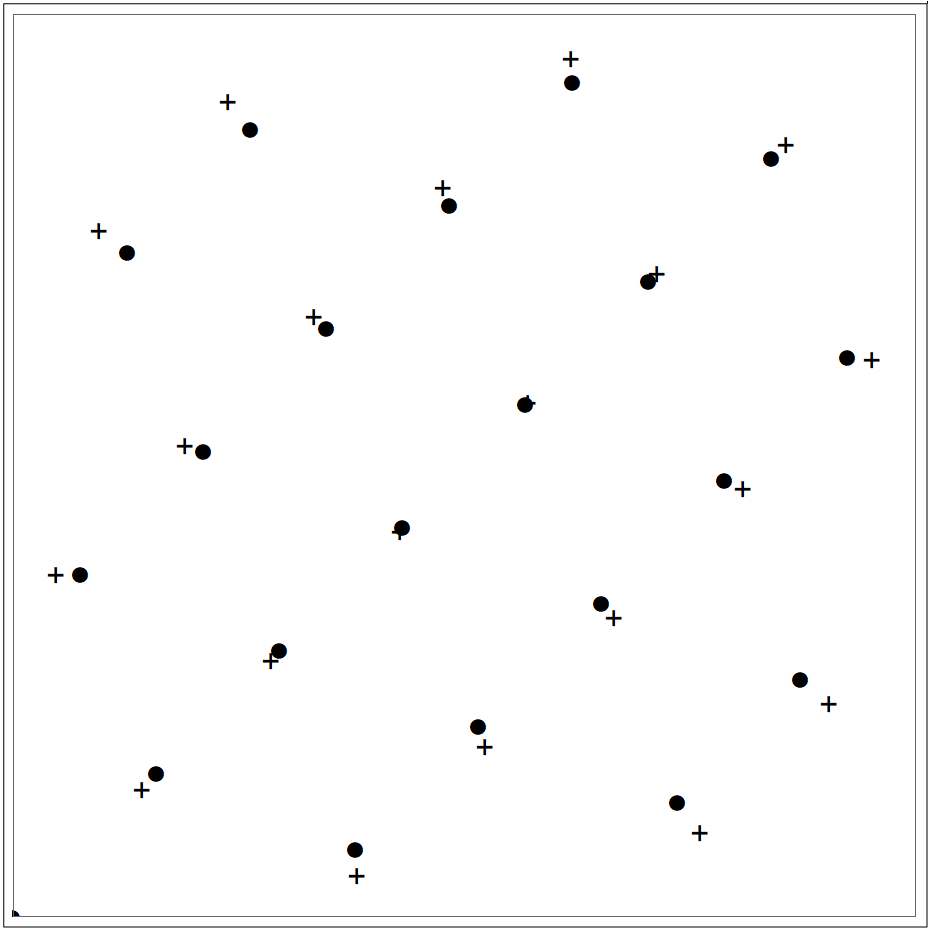}}
     \caption{A comparison of the Fibonacci lattice (+, both pictures) and $\widetilde{\mathcal{F}}_m$ (dots, left picture) and $\widetilde{\mathcal{F}}_m'$ (dots, right picture). While $\widetilde{\mathcal{F}}_m$ moves the points of the Fibonacci lattice towards the upper right corner, $\widetilde{\mathcal{F}}_m'$ moves them towards the center.}
     \label{comparison}
\end{figure}
    
    It is worthwhile to compare the dispersion of the modified Fibonacci lattices to other known constructions of point sets in the unit square with low dispersion. Krieg considered sparse grids in~\cite{Krieg}. Although they are not optimal with respect to dispersion asymptotically, these point sets have low dispersion for a small number of points. In dimension 2 sparse grids $\cP(k,2)$, where $k\in\NN$, as defined in~\cite{Krieg} have $N=2^k(k+1)$ points and a dispersion of $2^{-k-1}$. In particular, for $k=1$ this construction yields a set of 4 points with dispersion $4\,\mathrm{disp}(\cP(1,2))=1$. So does our point set $\widetilde{\mathcal{F}}_5'^*$. This value is best possible, since $4\,\mathrm{disp}(\cP_4)\geq 1$ for all $4$-element point sets $\cP_4$ in the unit square (see~\cite[Lemma 4]{Dumi}). (Note that this fact on the optimal dispersion of $4$-element point sets yields the lower bound~\eqref{lower54}. Therefore finding the minimal dispersion of $N$-element point sets for any $N\geq 5$ can probably improve this lower bound.) The grid $\cP(2,2)$ has 12 points (as many as $\widetilde{\mathcal{F}}_7'^*$) and a dispersion of $12\,\mathrm{disp}(\cP(2,2))=1.5$, which is lower than the dispersion of $\widetilde{\mathcal{F}}_7$, but already larger than the dispersion of $\widetilde{\mathcal{F}}_7^*$. The grid $\cP(3,2)$ has 32 points (one less than $\widetilde{\mathcal{F}}_9'^*$) and a dispersion of $32\,\mathrm{disp}(\cP(3,2))=2$, which is slightly larger than the dispersion of $\mathcal{F}_9^*$ already.

As mentioned above, the Fibonacci lattice $\mathcal{F}_m$ has the smallest possible torus dispersion for a point set with $F_m$ elements. It is also conjectured to have smallest possible periodic (torus) $L_2$ discrepancy, since for $m\leq 7$ this is known to be the case (see~\cite{HOe}). This discrepancy notion is defined with respect to periodic boxes as test sets. The periodic $L_2$ discrepancy is defined as  $$L_{2,N}^{\mathrm{per}}(\cP):=\left(\int_{[0,1]^2}\int_{[0,1]^2}\left|A(B(\bsx,\bsy),\cP)-N\lambda(B(\bsx,\bsy))\right|^2\rd \bsx\rd\bsy\right)^{\frac{1}{2}},  $$
where for any measurable subset $M$ of $[0,1]^2$ we define $$A(M,\cP):=|\{n \in \{0,1,\ldots,N-1\} \ : \ \bsx_n \in M\}|,$$ i.e., the number of elements from $\cP$ that belong to the set $M$. \\
The $L_2$ discrepancy of a point set can also be measured with respect to non-periodic boxes $B\in\mathcal{B}$. We call this notion extreme $L_2$ discrepancy and define it formally as
$$  L_{2,N}^{\mathrm{extr}}(\cP):=\left(\int_{[0,1]^2}\int_{[0,1]^2,\, \bsx\leq \bsy}\left|A([\bsx,\bsy),\cP)-N\lambda([\bsx,\bsy))\right|^2\rd \bsx\rd\bsy\right)^{\frac{1}{2}},  $$
where for $\bsx=(x_1,x_2)$ and $\bsy=(y_1,y_2)$ we set $[\bsx,\bsy)=[x_1,y_1)\times [x_2,y_2)$ and mean by $\bsx \leq \bsy$ that $0\leq x_1\leq y_1\leq 1$ and $0\leq x_2\leq y_2\leq 1$. \\
 The best-studied kind of $L_2$ discrepancy is the (standard) $L_2$ discrepancy, where the test sets are boxes anchored in the origin. The formal definition is
$$   L_{2,N}(\cP):=\left(\int_{[0,1]^2}\left|A([\bszero,\bst),\cP)-N\lambda([\bszero,\bst))\right|^2\rd \bst\right)^{\frac{1}{2}},  $$
where for $\bst=(t_1,t_2)\in [0,1]^2$ we set $[\bszero,\bst)=[0,t_1)\times [0,t_2)$ with area $\lambda([\bszero,\bst))=t_1t_2$. \\
It seems reasonable to compare point sets with small dispersion with respect to their standard and extreme $L_2$ discrepancies, as in all cases the test sets are non-periodic boxes. We calculated these quantities for the point sets $\mathcal{F}_m$ and $\widetilde{\mathcal{F}}_m$ for $m\in\{6,7,\dots,14\}$ using the explicit formulas stated in~\cite[Remark 14]{HKP}.
We also considered symmetrized variants of these point sets, where for a point set $\cP$ in the unit square we define its symmetrized version by $\cP^s:=\cP\cup \{(x,1-y): (x,y)\in\cP\}$. The numerical results can be found in Table~\ref{extreme}. We observe that $\widetilde{\mathcal{F}}_m$ and its symmetrized version $\widetilde{\mathcal{F}}_m^s$ outperform the (symmetrized) Fibonacci lattice for small values of $m$ indeed, whereas we found that this is not the case for $\widetilde{\mathcal{F}}_m'$ and its symmetrized variant. So far the symmetrized Fibonacci lattice has the lowest asymptotic $L_2$ discrepancy known among all point sets in the unit square (see~\cite{bilyk2}), but our numerical results indicate that our newly introduced point sets could have significantly lower $L_2$ discrepancy.
For more information on the three notions of $L_2$ discrepancy we introduced above and corresponding results on the (symmetrized) Fibonacci lattice we refer to~\cite{bilyk2, bilyk, HKP}.

  \begin{table}[ht]
\centering
\begin{tabular}[h]{ |c|| c|c||c|c||c|c|  } \hline 
  $m$ & $L_{2,F_m}^{\mathrm{extr}}(\mathcal{F}_m)$ & $L_{2,F_m}^{\mathrm{extr}}(\widetilde{\mathcal{F}}_m)$ & $L_{2,F_m}(\mathcal{F}_m)$ & $L_{2,F_m}(\widetilde{\mathcal{F}}_m)$ & $L_{2,2F_m}(\mathcal{F}_m^s)$ & $L_{2,2F_m}(\widetilde{\mathcal{F}}_m^s)$ \\ \hline
	6 & $0.23199$ & $0.22865$  & $0.89146$ & $0.77149$ & $0.74419$ & $0.60696$ \\ 
	7 & $0.24735$ & $0.24522$   & $0.84771$ & $0.68350$ & $0.75282$ & $0.65792$ \\ 
	8 & $0.26229$ & $0.26002$    & $0.91680$ & $0.79646$ & $0.76290$ & $0.62243$ \\ 
	9 & $0.27608$ & $0.27408$    & $0.87375$ & $0.70560$ & $0.77265$ & $0.67941$\\ 
	10 & $0.28931$ & $0.28737$   & $0.94094$ & $0.82293$ & $0.78221$ & $0.64240$ \\ 
	11 & $0.30191$ & $0.30007$    & $0.89899$ & $0.73295$ & $0.79167$ & $0.70095$ \\ 
	12 & $0.31402$ & $0.31225$   & $0.96441$ & $0.84927$  & $0.80104$ & $0.66386$\\ 
	13 & $0.32567$ & $0.32396$     & $0.92353$ & $0.76150$ & $0.81029$ & $0.72190$  \\ 
	14 & $0.33692$ & $0.33526$   & $0.98733$ & $0.87505$  & $0.81944$ & $0.68544$\\ \hline
\end{tabular}
\caption{Comparison of $L_2$ discrepancies of $\mathcal{F}_m$, $\widetilde{\mathcal{F}}_m$ and their symmetrized versions.  We also tested $\widetilde{\mathcal{F}}_m'$ and $\widetilde{\mathcal{F}}_m'^s$ with respect to their $L_2$ discrepancies, but they perform worse than the Fibonacci lattice despite the low dispersion of $\widetilde{\mathcal{F}}_m'$.}
\label{extreme}
\end{table}

\section{Conclusion and unanswered questions} \label{conclusion}

Our results show that
\begin{equation*}
    \liminf_{N\to\infty} N\disp(N,2)\in \left[\frac54,\frac{\varphi^3}{\sqrt{5}}\right].
\end{equation*}
Until now the right limit of this interval was 2. The exact value of $\liminf_{N\to\infty} N\disp(N,2)$ remains unknown, but we conjecture that $\varphi^3/\sqrt{5}$ is optimal. We believe that, given the success of the Fibonacci lattice with respect to other measures of uniformity, especially the torus dispersion, it is reasonable to assume that an optimal point set with respect to dispersion would be similar to such lattices. Moreover, the property that every maximal interior empty box amidst the points of $\widetilde{\mathcal F}_m$ has the same area as its dispersion seems too nice to not lead to the best possible constant. Finally, the optimality of the dispersion of $\widetilde{\mathcal F}_5'^*$ supports our conjecture.

It might be interesting to investigate whether the point sets $\widetilde{\mathcal{F}}_m$ and $\widetilde{\mathcal{F}}_m^s$ show good distribution properties with respect to other measures of irregularities of distribution too. Numerical experiments show that the extreme and standard $L_2$ discrepancy of $\widetilde{\mathcal{F}}_m$ is smaller than the respective $L_2$ discrepancies of the Fibonacci lattice for $m\in\{6,7,\dots,14\}$ (see Table~\ref{extreme}). The same holds for the standard $L_2$ discrepancy of the symmetrized lattices.  We do not know if this is the case for general $m$ and if the difference is significant in the sense that $$ \lim_{m\to\infty} \frac{L_{2,F_m}^{\mathrm{extr}}(\widetilde{\mathcal{F}}_m)}{L_{2,F_m}^{\mathrm{extr}}(\mathcal{F}_m)}<1  \text{, \,} \lim_{m\to\infty} \frac{L_{2,F_m}(\widetilde{\mathcal{F}}_m)}{L_{2,F_m}(\mathcal{F}_m)}<1 \text{\, and/or \,}  \lim_{m\to\infty} \frac{L_{2,2F_m}(\widetilde{\mathcal{F}}_m^s)}{L_{2,2F_m}(\mathcal{F}_m^s)}<1.$$

Finally we would like to mention an (unpublished) result by Thomas Lachmann which he found several years ago and is closely related to the content of this paper. He communicated this result to us when the preparation of the current paper was in its final stages. Lachmann considered a rotated variant of the infinite grid $\ZZ^2$. More precisely, he considered the grid
  $ \Gamma:=\left\{M\cdot\bsz: \bsz \in \ZZ^2\right\}$, 
  where $$M=\frac{1}{\sqrt{\varphi^2+1}}\begin{pmatrix}
\varphi & 1  \\
-1  & \varphi 
\end{pmatrix}. $$
He found that every maximal empty axes-parallel box amidst $\Gamma$ has same area $\varphi^4/(\varphi^2+1)$. Now choose a real number $R\geq 2$ and define $\Gamma(R):=\frac{1}{R}(\Gamma\cap [0,R]^2)$. Then the dispersion of the set $\Gamma(R)\subseteq [0,1]^2$ is $$\mathrm{disp}\left(\Gamma(R)\right)=\frac{1}{R^2}\frac{\varphi^4}{\varphi^2+1}.$$
Since $|\Gamma(R)|\approx \lambda([0,R]^2) \det(M)=R^2$, we have $\lim_{R\to\infty}\frac{|\Gamma(R)|}{R^2}=1$ and therefore $$\lim_{R\to\infty}|\Gamma(R)|\mathrm{disp}\left(\Gamma(R)\right)=\frac{\varphi^4}{\varphi^2+1}=\frac{\varphi^3}{\sqrt{5}}.$$

\end{document}